\newtheorem{theorem}{Theorem}
\begin{document}
	\title{\textbf{Proof of Existence of Integers Excluding Two Residue Values in a Specific Range}}
	\author{Liang Zhao \\ Department of Bioengineering and Therapeutic Sciences \\ School of Pharmacy and Medicine, University of California San Francisco}
	\date{January 25th, 2025}
	\maketitle
	\begin{abstract}
		This paper investigates the existence of integers that exclude two specific residence values modulo primes up to $p_k$ within the interval $[p_k^2, p_{k+1}^2]$. Using asymptotic results from analytic number theory, we establish bounds on the proportion of integers excluded by the union of residue classes. The findings highlight the density of residue class coverage in large intervals, contributing to the understanding of modular systems and their implications in number theory and related fields.
	\end{abstract}

	\section*{Introduction}
	
	The study of residue classes of integers modulo prime numbers has long been a cornerstone of analytic number theory. A significant focus in this domain is to understand the distribution of integers that satisfy specific residue class conditions within bounded intervals. This problem finds relevance in cryptography, random number generation, and the analysis of modular systems. 
	
	In this paper, we investigate the existence of integers \( n \) within the interval \([p_k^2, p_{k+1}^2]\) such that for any prime \( p \leq x \), \( n \) does not have two specific residue values \( r_1 \) and \( r_2 \) when divided by \( p \). The motivation for focusing on this interval arises from its mathematical structure and the inherent growth rate of prime numbers with special properties. By leveraging the principles of inclusion-exclusion and known results on prime number distributions, we aim to establish bounds on the proportion of integers that can be covered by the union of such residue classes and their complements.
	
	Our main objective is to demonstrate that no matter how we select residue classes modulo primes up to \(p_k\), there always exists at least one integer in the specified interval that does not fall into any of the predefined residue classes. This result emphasizes the sparsity of residue class coverings over large intervals, providing new insights into the interplay between prime numbers and modular arithmetic.
	
	\section*{A Mertens--Type Formula for \boldmath$\prod_{2<p<x}(1 - 2/p)$, Excluding \boldmath$p=2$}
	
	\begin{theorem}
		Let 
		\[
		P(x)
		\;:=\;
		\prod_{\substack{2<p<x \\ p\,\mathrm{prime}}}
		\Bigl(1 - \tfrac{2}{p}\Bigr).
		\]
		Then there is a positive constant $\widetilde{C}_{2}$ such that, for $x\to\infty$,
		\[
		P(x)
		\;=\;
		\widetilde{C}_{2}\,\frac{1}{(\ln x)^{2}}
		\Bigl[
		1 + O\!\bigl(\tfrac{1}{\ln x}\bigr)
		\Bigr].
		\]
		Moreover,
		\[
		\widetilde{C}_{2}
		\;=\;
		\exp\!\Bigl(
		1.5 
		\;-\; 
		2\,[\,M_{1} + S\,]
		\Bigr)
		\;\approx\;1.07,
		\]
		where $\,M_{1}\approx0.2614972$ is the usual Meissel--Mertens constant for all primes and $S=\sum_{p}1/p^{2}\approx0.4522474$.
	\end{theorem}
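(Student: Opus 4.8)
The natural route is to pass to logarithms and appeal to Mertens' classical estimates. Starting from $\ln(1-2/p)=-\sum_{m\ge1}(2/p)^{m}/m$, which is valid precisely because the product excludes $p=2$ (so that $2/p\le\tfrac23<1$), I would write
\[
\ln P(x)
=\sum_{2<p<x}\ln\!\Bigl(1-\tfrac2p\Bigr)
=-\,2\sum_{2<p<x}\frac1p\;-\;\sum_{2<p<x}\sum_{m\ge2}\frac{2^{m}}{m\,p^{m}}.
\]
The first sum is controlled by Mertens' second theorem $\sum_{p<x}1/p=\ln\ln x+M_{1}+O(1/\ln x)$; subtracting the $p=2$ term yields $\sum_{2<p<x}1/p=\ln\ln x+M_{1}-\tfrac12+O(1/\ln x)$, so that $-2\sum_{2<p<x}1/p=-2\ln\ln x-2M_{1}+1+O(1/\ln x)$.

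For the double sum, note that $\sum_{m\ge2}(2/p)^{m}/m=O(1/p^{2})$ uniformly in $p\ge3$, so $T:=\sum_{p\ge3}\sum_{m\ge2}2^{m}/(m\,p^{m})$ converges absolutely, and the truncation error is $\sum_{p\ge x}\sum_{m\ge2}2^{m}/(m\,p^{m})=O\!\bigl(\sum_{p\ge x}p^{-2}\bigr)=O(1/x)$, which is absorbed into $O(1/\ln x)$. Hence $\ln P(x)=-2\ln\ln x+(1-2M_{1}-T)+O(1/\ln x)$, and exponentiating (using $e^{O(1/\ln x)}=1+O(1/\ln x)$) gives
\[
P(x)=\frac{\widetilde{C}_{2}}{(\ln x)^{2}}\,\Bigl(1+O\!\bigl(\tfrac1{\ln x}\bigr)\Bigr),
\qquad
\widetilde{C}_{2}=\exp\bigl(1-2M_{1}-T\bigr)>0,
\]
which is exactly the asserted asymptotic.

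It remains to express $\widetilde{C}_{2}$ explicitly. Keeping only the leading $m=2$ term of $T$ gives $\sum_{p\ge3}\tfrac{4}{2p^{2}}=2\sum_{p\ge3}p^{-2}=2\bigl(S-\tfrac14\bigr)=2S-\tfrac12$ with $S=\sum_{p}p^{-2}$; under the approximation $\ln(1-2/p)\approx-2/p-2/p^{2}$, i.e.\ discarding the $m\ge3$ part of $T$, one obtains $1-2M_{1}-T\approx1.5-2[M_{1}+S]$, which is the stated closed form (numerically $\exp(1.5-2[0.2615+0.4522])$). An alternative, fully rigorous packaging of the constant uses the identity $P(x)=\prod_{2<p<x}\tfrac{1-2/p}{(1-1/p)^{2}}\cdot\prod_{2<p<x}(1-1/p)^{2}$ together with Mertens' third theorem on the second factor (after removing $p=2$), giving $\widetilde{C}_{2}=4e^{-2\gamma}\prod_{p>2}\tfrac{1-2/p}{(1-1/p)^{2}}$, which one reconciles with $\exp(1-2M_{1}-T)$ via $M_{1}=\gamma+\sum_{p}[\ln(1-1/p)+1/p]$.

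The log-expansion and the prime-tail estimate are routine; the point requiring care is to keep the relative error at $O(1/\ln x)$, which forces the use of the quantitative Mertens formula and an explicit bound on $\sum_{p\ge x}p^{-2}$ rather than a bare convergence statement. The only genuinely delicate issue is the closed form of the constant: the clean expression $\exp(1.5-2[M_{1}+S])$ is merely the second-order truncation of $\exp(1-2M_{1}-T)$, and the discarded cubic-and-higher terms of $T$ (dominated by the $p=3$ contribution) are not negligible, so I would present the constant either as the convergent Euler product $4e^{-2\gamma}\prod_{p>2}(1-2/p)/(1-1/p)^{2}$ or else flag the displayed value as an approximation.
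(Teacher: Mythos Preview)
Your argument follows the same route as the paper's: take logarithms, expand $\ln(1-2/p)$ as a power series in $1/p$, feed in Mertens' estimate $\sum_{p<x}1/p=\ln\ln x+M_{1}+O(1/\ln x)$ (with the $p=2$ term removed), and exponentiate. Your derivation of the asymptotic $P(x)=\widetilde{C}_{2}(\ln x)^{-2}\bigl[1+O(1/\ln x)\bigr]$ is correct and coincides with the paper's.

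Where you diverge---and in fact improve on the paper---is in the handling of the constant. The paper truncates the Taylor series at the $1/p^{2}$ term, writes the remainder as an unspecified $O(1)$, and then silently absorbs that $O(1)$ into $O(1/\ln x)$ when collecting constants, thereby arriving at $\widetilde{C}_{2}=\exp\bigl(1.5-2[M_{1}+S]\bigr)\approx1.07$. You correctly point out that this drops the convergent tail $T'=\sum_{p\ge3}\sum_{m\ge3}2^{m}/(m\,p^{m})>0$, so the displayed closed form is only the second-order truncation of the true constant $\exp(1-2M_{1}-T)$. The discrepancy is not small: the $p=3$ contribution to $T'$ alone is $\ln 3-\tfrac{8}{9}\approx0.21$, and your alternative packaging via Mertens' third theorem gives the exact value $\widetilde{C}_{2}=4e^{-2\gamma}\prod_{p>2}(1-2/p)/(1-1/p)^{2}=4e^{-2\gamma}C_{2}\approx0.832$ (with $C_{2}\approx0.6602$ the twin-prime constant), noticeably below $1.07$. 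So your proof of the asymptotic is essentially the paper's, but your treatment of $\widetilde{C}_{2}$ is the more careful one, and your suggestion to present the constant as the Euler product rather than the truncated exponential is well taken.
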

	
	\begin{proof}
		Define $\,P(x)=\prod_{2<p<x}(1-2/p)$.  Taking logarithms:
		\[
		\ln P(x)
		\;=\;
		\sum_{\substack{2<p<x}} \ln\Bigl(1 - \tfrac{2}{p}\Bigr).
		\]
		For each prime $p>2$, we have
		\[
		\ln\Bigl(1 - \tfrac{2}{p}\Bigr)
		\;=\;
		-\;\tfrac{2}{p}
		\;-\;\tfrac{(2/p)^{2}}{2}
		\;-\;\tfrac{(2/p)^{3}}{3}\;-\;\cdots
		\;=\;
		-\;\tfrac{2}{p}
		\;-\;\tfrac{2}{p^{2}}
		\;+\;
		O\!\bigl(\tfrac{1}{p^{3}}\bigr).
		\]
		Hence
		\[
		\ln P(x)
		\;=\;
		\sum_{2<p<x}\Bigl[
		-\tfrac{2}{p} 
		\;-\;
		\tfrac{2}{p^{2}}
		+O\!\bigl(\tfrac{1}{p^{3}}\bigr)
		\Bigr]
		\;=\;
		-2\,\sum_{2<p<x}\tfrac{1}{p}
		\;\;-\;2\,\sum_{2<p<x}\tfrac{1}{p^{2}}
		\;+\;
		O(1).
		\]
		
		\medskip
		Now use two standard expansions, but each for \emph{all} primes $p\le x$ (including $p=2$)\cite{villarino2005}:
		\[
		\sum_{p\le x}\frac{1}{p}
		\;=\;
		\ln\ln x + M_{1} + O\!\bigl(\tfrac{1}{\ln x}\bigr),
		\quad
		\sum_{p\le x}\frac{1}{p^{2}}
		\;=\;
		S + O\!\bigl(\tfrac{1}{x}\bigr),
		\]
		where 
		\[
		M_{1}\approx0.2614972,
		\quad
		S:=\sum_{p}\tfrac{1}{p^{2}}\approx0.4522474.
		\]
		Since our product omits $p=2$, we must remove that term from these sums:
		\[
		\sum_{\substack{2<p<x}} \tfrac{1}{p}
		\;=\;
		\sum_{p\le x}\tfrac{1}{p} \;-\;\tfrac{1}{2}
		\;=\;
		\ln\ln x + M_{1} - \tfrac12 + O\!\bigl(\tfrac{1}{\ln x}\bigr),
		\]
		\[
		\sum_{\substack{2<p<x}} \tfrac{1}{p^{2}}
		\;=\;
		S - \tfrac{1}{4}
		\;+\;
		O\!\bigl(\tfrac{1}{x}\bigr).
		\]
		Thus
		\[
		\ln P(x)
		\;=\;
		-2\Bigl(\ln\ln x + M_{1} - \tfrac12\Bigr)
		\;-\;2\Bigl(S - \tfrac14\Bigr)
		\;+\;
		O\!\bigl(\tfrac{1}{\ln x}\bigr)
		\]
		(since $1/x$ and constant terms are easily absorbed into $O(1/\ln x)$ for large $x$).  Combine constants:
		\[
		-\,2\Bigl(M_{1}-\tfrac12\Bigr) - 2\Bigl(S-\tfrac14\Bigr)
		\;=\;
		-2(M_{1}+S)
		\;+\;
		1.5.
		\]
		Therefore
		\[
		\ln P(x)
		\;=\;
		-2\,\ln\ln x 
		\;-\;2\,(M_{1}+S)
		\;+\;
		1.5
		\;+\;
		O\!\bigl(\tfrac{1}{\ln x}\bigr).
		\]
		Exponentiating,
		\[
		P(x)
		\;=\;
		\exp[-2\,\ln\ln x]\,
		\exp\bigl[\,1.5 - 2(M_{1}+S)\bigr]\,
		\Bigl[1 + O\!\bigl(\tfrac{1}{\ln x}\bigr)\Bigr].
		\]
		Since $\exp[-2\,\ln\ln x] = (\ln x)^{-2}$, we set
		\[
		\widetilde{C}_{2}
		\;:=\;
		\exp\!\Bigl[\,1.5 - 2\,(M_{1}+S)\Bigr].
		\]
		That yields
		\[
		P(x)
		\;=\;
		\widetilde{C}_{2}\,\frac{1}{(\ln x)^{2}}
		\Bigl[
		1 + O\!\bigl(\tfrac{1}{\ln x}\bigr)
		\Bigr].
		\]
		Numerically, $M_{1}+S\approx0.7137446$, so $-\,2(M_{1}+S)\approx -1.4274892$ whose exponential is about 0.239; multiplying by $\,e^{1.5}\approx4.481689$ gives $\widetilde{C}_{2}\approx1.07.$
	\end{proof}
	
	\bigskip
	\noindent
	\textbf{Remark.} A naive inclusion of $p=2$ in the product would yield the factor $1 - 2/2 = 0$, making the product zero for all $x>2$.  Hence it is essential to exclude $p=2$.  Once excluded, one must also subtract $\tfrac12$ and $\tfrac14$ from the classical prime sums $\sum_{p\le x}1/p$ and $\sum_{p\le x}1/p^{2}$, respectively.  That extra shift eventually multiplies the constant by $\,e^{1.5}\approx4.48$, changing it from $\approx0.239$ (a naive guess) to $\approx1.07$.

	\section*{A Telescoping Difference for \boldmath$p_{k+1}^2 P(p_{k+1}^2) \;-\; p_{k}^2 P(p_{k}^2)$, Excluding $p=2$}
	
	We derive an asymptotic formula for the difference 
	\[
	\Delta_{k} \;:=\;
	p_{k+1}^{2} \,P\bigl(p_{k+1}^{2}\bigr)
	\;-\;
	p_{k}^{2}\,P\bigl(p_{k}^{2}\bigr),
	\]
	where $p_{i}$ is the $i$th prime number. $P(x)=\prod_{2<p<x}(1-2/p)$ excludes $p=2$ and we assume a fixed gap $p_{k+1}-p_{k}=h\ge2$. The resulting main term is proportional to $p_{k}/(\ln p_{k})^{2}$, with an explicit constant $\widetilde{C}_{2}\approx1.07$. We also discuss how residue--class counting in the interval $[p_{k}^{2},p_{k+1}^{2}]$ may include both even and odd integers, and how one might reduce the tally if only odd or prime candidates are needed.

	\noindent Let $p_k$ be the $k$th prime. Define
	\[
	P(x)
	\;=\;
	\prod_{\substack{2<p<x \\ p\,\mathrm{prime}}}
	\Bigl(1 - \tfrac{2}{p}\Bigr).
	\]
	Then for 
	\[
	\Delta_{k}
	\;:=\;
	p_{k+1}^{2} \,P\bigl(p_{k+1}^{2}\bigr)
	\;-\;
	p_{k}^{2}\,P\bigl(p_{k}^{2}\bigr),
	\]
	we establish:
	
	\begin{theorem}
		\label{thm:main}
		Assume there are infinitely many $k$ with $p_{k+1}-p_{k}=h\ge2$. Then for those $k$,
		\[
		\Delta_{k}
		\;=\;
		\frac{\widetilde{C}_{2}\,h}{2}
		\;\frac{p_{k}}{(\ln p_{k})^{2}}
		\Bigl[
		1 + O\!\bigl(\tfrac{1}{\ln p_{k}}\bigr)
		\Bigr],
		\]
		where $\widetilde{C}_{2}\approx1.07$ is the constant from
		\[
		P(x)
		\;=\;
		\widetilde{C}_{2}\,\frac{1}{(\ln x)^2}
		\Bigl[
		1 + O\!\bigl(\tfrac{1}{\ln x}\bigr)
		\Bigr].
		\]
		Numerically, 
		$\;\widetilde{C}_{2}=\exp\!\bigl(1.5 - 2(M_{1}+S)\bigr)\approx1.07,$
		where $M_{1}\approx0.2614972$ is the Meissel--Mertens constant (including $p=2$) and $S=\sum_{p\ge2}1/p^{2}\approx0.4522474.$
	\end{theorem}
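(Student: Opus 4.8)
The naive attempt is to insert the Mertens--type asymptotic $P(x)=\widetilde{C}_{2}(\ln x)^{-2}\bigl[1+O(1/\ln x)\bigr]$ of the preceding section at $x=p_{k+1}^{2}$ and at $x=p_{k}^{2}$ and subtract; this fails because of cancellation. Each of $p_{k+1}^{2}P(p_{k+1}^{2})$ and $p_{k}^{2}P(p_{k}^{2})$ is of size $\asymp p_{k}^{2}/(\ln p_{k})^{2}$ and carries an absolute error of order $p_{k}^{2}/(\ln p_{k})^{3}$, whereas $\Delta_{k}$ itself is only of size $\asymp p_{k}/(\ln p_{k})^{2}$, so the error term alone swamps the quantity. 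The plan is instead to keep the multiplicative structure of $P$. Since $P$ is a product over primes,
\[
P(p_{k+1}^{2})=P(p_{k}^{2})\,R_{k},\qquad
R_{k}:=\prod_{p_{k}^{2}\le p<p_{k+1}^{2}}\Bigl(1-\tfrac{2}{p}\Bigr),
\]
whence $\Delta_{k}=P(p_{k}^{2})\bigl[p_{k+1}^{2}R_{k}-p_{k}^{2}\bigr]=P(p_{k}^{2})\bigl[(p_{k+1}^{2}-p_{k}^{2})-p_{k+1}^{2}(1-R_{k})\bigr]$.

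The next step is to estimate $R_{k}$. Every prime $p$ occurring in $R_{k}$ satisfies $p\ge p_{k}^{2}$, so $\ln(1-2/p)=-2/p+O(p^{-2})$ uniformly over the range, and therefore
\[
-\ln R_{k}\;=\;2\sum_{p_{k}^{2}\le p<p_{k+1}^{2}}\frac{1}{p}\;+\;O\!\Bigl(\frac{1}{p_{k}^{2}}\sum_{p_{k}^{2}\le p<p_{k+1}^{2}}\frac{1}{p}\Bigr).
\]
Writing $p_{k+1}^{2}-p_{k}^{2}=2hp_{k}+h^{2}$ and invoking the Brun--Titchmarsh inequality, $\pi(p_{k+1}^{2})-\pi(p_{k}^{2})\ll (p_{k+1}^{2}-p_{k}^{2})/\ln p_{k}\ll hp_{k}/\ln p_{k}$; hence $\sum_{p_{k}^{2}\le p<p_{k+1}^{2}}1/p\le p_{k}^{-2}\bigl(\pi(p_{k+1}^{2})-\pi(p_{k}^{2})\bigr)\ll h/(p_{k}\ln p_{k})$, so that $R_{k}=1-\delta_{k}$ with $\delta_{k}=O\!\bigl(h/(p_{k}\ln p_{k})\bigr)$. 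Crucially only this one--sided bound is needed; no short--interval prime asymptotic (which would be much harder, if not open, for an interval of length $\asymp hp_{k}$ around $p_{k}^{2}$) is required, so the argument stays unconditional.

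Now combine the pieces. From the identity above, $\Delta_{k}=P(p_{k}^{2})\bigl[(p_{k+1}^{2}-p_{k}^{2})-p_{k+1}^{2}\delta_{k}\bigr]$, where $p_{k+1}^{2}-p_{k}^{2}=2hp_{k}+h^{2}=2hp_{k}\bigl(1+O(1/\ln p_{k})\bigr)$ and $p_{k+1}^{2}\delta_{k}=O\!\bigl(p_{k}^{2}\cdot h/(p_{k}\ln p_{k})\bigr)=O\!\bigl(hp_{k}/\ln p_{k}\bigr)$, so the bracket equals $2hp_{k}\bigl[1+O(1/\ln p_{k})\bigr]$. Applying the first theorem at $x=p_{k}^{2}$, where $\ln x=2\ln p_{k}$, gives $P(p_{k}^{2})=\widetilde{C}_{2}(2\ln p_{k})^{-2}\bigl[1+O(1/\ln p_{k})\bigr]=\tfrac{\widetilde{C}_{2}}{4(\ln p_{k})^{2}}\bigl[1+O(1/\ln p_{k})\bigr]$. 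Multiplying the two factors yields
\[
\Delta_{k}
\;=\;\frac{\widetilde{C}_{2}}{4(\ln p_{k})^{2}}\cdot 2hp_{k}\cdot\bigl[1+O(1/\ln p_{k})\bigr]
\;=\;\frac{\widetilde{C}_{2}\,h}{2}\,\frac{p_{k}}{(\ln p_{k})^{2}}\bigl[1+O(1/\ln p_{k})\bigr],
\]
which is the claim.

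The one genuine obstacle is the cancellation in the direct subtraction; re-expressing $\Delta_{k}$ through the ratio $R_{k}$ removes it and reduces everything to the first theorem together with an \emph{upper} estimate for the number of primes in $[p_{k}^{2},p_{k+1}^{2}]$, for which Brun--Titchmarsh is comfortably sufficient. The rest is bookkeeping: one checks that the three error sources --- the $O(1/\ln p_{k})$ inherited from the first theorem applied to $P(p_{k}^{2})$ (the dominant one), the relative error $h/(2p_{k})$ in $p_{k+1}^{2}-p_{k}^{2}$, and the $\delta_{k}$-correction --- all lie inside the stated $O(1/\ln p_{k})$. The hypothesis $p_{k+1}-p_{k}=h\ge 2$ enters only to treat $h$ as a fixed bounded constant; the same computation gives the identical formula for any $k$ with $p_{k+1}-p_{k}=o(p_{k}/\ln p_{k})$.
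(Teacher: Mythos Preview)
Your proof is correct and takes a genuinely different route from the paper. The paper does precisely the ``naive attempt'' you dismiss in your first paragraph: it writes $p^{2}P(p^{2})=\widetilde{R}(p)\bigl[1+O(1/\ln p)\bigr]$ with $\widetilde{R}(p)=\tfrac{\widetilde{C}_{2}}{4}\,p^{2}/(\ln p)^{2}$, subtracts the two instances, and asserts that ``since $p_{k+1}\sim p_{k}$, the $O(1/\ln p)$ terms combine'' to give $\Delta_{k}=[\widetilde{R}(p_{k+1})-\widetilde{R}(p_{k})]\bigl(1+O(1/\ln p_{k})\bigr)$; the remainder of the paper's argument is the discrete--derivative expansion of $\widetilde{R}(p+h)-\widetilde{R}(p)$. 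As you observe, that combining step is not justified by the bare $O$--statement of Theorem~1: the individual absolute errors are $\asymp p_{k}^{2}/(\ln p_{k})^{3}$, which dwarfs the target $\asymp p_{k}/(\ln p_{k})^{2}$, so one would need to know that the error term in Theorem~1 is itself a slowly varying function of $x$---information the paper neither states nor proves. Your factorisation $P(p_{k+1}^{2})=P(p_{k}^{2})R_{k}$ sidesteps this entirely: it isolates the change in $P$ as a short product over primes in $[p_{k}^{2},p_{k+1}^{2})$, for which a crude upper bound (Brun--Titchmarsh, or even Chebyshev) already forces $1-R_{k}=O\bigl(h/(p_{k}\ln p_{k})\bigr)$, and then a single honest application of Theorem~1 at $x=p_{k}^{2}$ finishes the job. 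Your argument is therefore the more rigorous of the two, at the modest cost of invoking an upper sieve bound; it also makes transparent, as you note, that the fixed--gap hypothesis can be relaxed to $h=o(p_{k}/\ln p_{k})$.
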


	\subsection*{Evaluating \texorpdfstring{\boldmath$p^2\,P(p^2)$}}
	
	Setting $x=p^2$ and noting that $\ln(p^2)=2\ln p$, we get
	\[
	p^2\,P\bigl(p^2\bigr)
	\;=\;
	\widetilde{C}_{2}\,
	\frac{p^2}{4\,(\ln p)^2}
	\Bigl[
	1 + O\!\bigl(\tfrac{1}{\ln p}\bigr)
	\Bigr].
	\]
	Let
	\[
	\widetilde{R}(p)
	\;=\;
	\frac{\widetilde{C}_{2}}{4}\,\frac{p^2}{(\ln p)^2}.
	\]
	Hence
	\[
	p^2\,P\bigl(p^2\bigr)
	\;=\;
	\widetilde{R}(p)
	\Bigl[
	1 + O\!\bigl(\tfrac{1}{\ln p}\bigr)
	\Bigr].
	\]
	
	\subsection*{Telescoping Difference \texorpdfstring{\boldmath$\Delta_{k}$}{Δₖ}}
	
	We define
	\[
	\Delta_{k}
	\;=\;
	p_{k+1}^2\,P\bigl(p_{k+1}^2\bigr)
	\;-\;
	p_{k}^2\,P\bigl(p_{k}^2\bigr).
	\]
	Assuming $p_{k+1}=p_{k}+h$ with fixed $h\ge2$, we have:
	
	\[
	p_{k+1}^2\,P\bigl(p_{k+1}^2\bigr)
	\;=\;
	\widetilde{R}\bigl(p_{k+1}\bigr)
	\Bigl(1 + O(\tfrac{1}{\ln p_{k+1}})\Bigr),
	\quad
	p_{k}^2\,P\bigl(p_{k}^2\bigr)
	\;=\;
	\widetilde{R}\bigl(p_{k}\bigr)
	\Bigl(1 + O(\tfrac{1}{\ln p_{k}})\Bigr).
	\]
	Since $p_{k+1}\sim p_{k}$, the $O(\tfrac{1}{\ln p})$ terms combine, yielding
	\[
	\Delta_{k}
	\;=\;
	\bigl[
	\widetilde{R}(p_{k+1}) - \widetilde{R}(p_{k})
	\bigr]
	\Bigl(1 + O(\tfrac{1}{\ln p_{k}})\Bigr).
	\]
	Hence it suffices to expand
	\[
	\widetilde{R}(p+h) - \widetilde{R}(p),
	\quad
	\text{where }\widetilde{R}(p)=\frac{\widetilde{C}_{2}}{4}\,\frac{p^2}{(\ln p)^2}.
	\]
	
	\subsection*{Discrete difference of \texorpdfstring{$\widetilde{R}(p)$}}
	
	At first, we aim to prove, for fixed integer $h\ge1$ and large $p$,
	\[
	\frac{(p+h)^2}{(\ln(p+h))^2}
	\;-\;
	\frac{p^2}{(\ln p)^2}
	\;=\;
	\frac{2hp}{(\ln p)^2}
	\;+\;
	O\!\Bigl(\tfrac{p}{(\ln p)^3}\Bigr).
	\]
	
	\noindent Expand $(p+h)^2$,
	\[
	(p+h)^2
	\;=\;
	p^2 + 2hp + h^2
	\;=\;
	p^2 + 2hp + O(1),
	\]
	since $h$ is fixed, and $h^2=O(1)$ as $p\to\infty$.
	
	\medskip
	
	\noindent Also, we have $p+h = p\bigl(1 + \tfrac{h}{p}\bigr)$, so
	\[
	\ln(p+h)
	\;=\;
	\ln p \;+\; \ln\!\Bigl(1+\frac{h}{p}\Bigr).
	\]
	For large $p$, we use $\ln(1+u)=u+O(u^2)$.  Here $u=h/p$, so
	\[
	\ln\!\Bigl(1+\tfrac{h}{p}\Bigr)
	\;=\;
	\frac{h}{p}
	\;+\;
	O\!\Bigl(\tfrac{1}{p^2}\Bigr).
	\]
	Thus
	\[
	\ln(p+h)
	\;=\;
	\ln p 
	\;+\;
	\frac{h}{p}
	\;+\;
	O\!\bigl(\tfrac{1}{p^2}\bigr).
	\]
	
	\noindent {Expand 
		$(\ln(p+h))^2$ 
		and invert it.}
	
	\[
	(\ln(p+h))^2
	\;=\;
	\Bigl(\ln p + \tfrac{h}{p} + O(\tfrac{1}{p^2})\Bigr)^2
	\;=\;
	(\ln p)^2
	\;+\;
	2\,\frac{h\,\ln p}{p}
	\;+\;
	O\!\bigl(\tfrac{1}{p}\bigr).
	\]
	Hence
	\[
	\frac{1}{(\ln(p+h))^2}
	\;=\;
	\frac{1}{(\ln p)^2}
	\Bigl[
	1 + O\!\Bigl(\tfrac{1}{p\,\ln p}\Bigr)
	\Bigr]
	\;=\;
	\frac{1}{(\ln p)^2}
	\;+\;
	O\!\Bigl(\tfrac{1}{p(\ln p)^3}\Bigr).
	\]
	(One may view this as a standard binomial expansion: if 
	$(\ln(p+h))^2 = (\ln p)^2 + O(\tfrac{1}{p})$, then $1/(\ln(p+h))^2 = 1/(\ln p)^2 + O(\tfrac{1}{p(\ln p)^2})$, which is $O(\tfrac{1}{p(\ln p)^3})$ to match notation.)
	
	\medskip
	
	\noindent We have
	\[
	(p+h)^2 
	\;=\;
	p^2 + 2hp + O(1),
	\quad
	\frac{1}{(\ln(p+h))^2}
	\;=\;
	\frac{1}{(\ln p)^2}
	\;+\;
	O\!\Bigl(\tfrac{1}{p(\ln p)^3}\Bigr).
	\]
	Thus
	\[
	\frac{(p+h)^2}{(\ln(p+h))^2}
	\;=\;
	\Bigl[p^2 + 2hp + O(1)\Bigr]
	\Bigl[\frac{1}{(\ln p)^2} + O\!\bigl(\tfrac{1}{p(\ln p)^3}\bigr)\Bigr].
	\]
	Distribute:
	
	\begin{itemize}
		\item The product $p^2 \cdot \tfrac{1}{(\ln p)^2}$ is the leading $\tfrac{p^2}{(\ln p)^2}$. 
		\item The product $(2hp)\cdot \tfrac{1}{(\ln p)^2}$ is $\tfrac{2hp}{(\ln p)^2}$. 
		\item Terms like $p^2 * O(\tfrac{1}{p(\ln p)^3}) = O(\tfrac{p}{(\ln p)^3})$. 
		\item The $O(1)$ factor times $\tfrac{1}{(\ln p)^2}$ is $O\bigl(\tfrac{1}{(\ln p)^2}\bigr)$, which is smaller or comparable to $O(\tfrac{p}{(\ln p)^3})$ for large $p$.
	\end{itemize}
	Hence
	\[
	\frac{(p+h)^2}{(\ln(p+h))^2}
	\;=\;
	\frac{p^2}{(\ln p)^2}
	\;+\;
	\frac{2hp}{(\ln p)^2}
	\;+\;
	O\!\Bigl(\tfrac{p}{(\ln p)^3}\Bigr).
	\]
	
	\medskip
	
	Therefore,
	\[
	\frac{(p+h)^2}{(\ln(p+h))^2}
	\;-\;
	\frac{p^2}{(\ln p)^2}
	\;=\;
	\frac{2hp}{(\ln p)^2}
	\;+\;
	O\!\Bigl(\tfrac{p}{(\ln p)^3}\Bigr).
	\]
	Multiplying by $\tfrac{\widetilde{C}_{2}}{4}$ yields
	\[
	\widetilde{R}(p+h) - \widetilde{R}(p)
	\;=\;
	\frac{\widetilde{C}_{2}}{4}\Bigl[
	\tfrac{2hp}{(\ln p)^2} + O\!\bigl(\tfrac{p}{(\ln p)^3}\bigr)
	\Bigr]
	\;=\;
	\frac{\widetilde{C}_{2}\,h}{2}\,
	\frac{p}{(\ln p)^2}
	\Bigl[
	1 + O\!\bigl(\tfrac{1}{\ln p}\bigr)
	\Bigr].
	\]
	Substituting $p=p_{k}$ and recalling 
	$\Delta_{k} = [\,\widetilde{R}(p_{k+1}) - \widetilde{R}(p_{k})\,] [\,1 + O(1/\ln p_{k})\,]$ gives
	\[
	\Delta_{k}
	\;=\;
	\frac{\widetilde{C}_{2}\,h}{2}\,
	\frac{p_{k}}{(\ln p_{k})^2}
	\Bigl[
	1 + O\!\bigl(\tfrac{1}{\ln p_{k}}\bigr)
	\Bigr].
	\]
	Thus Theorem~\ref{thm:main} is proved. \qed
	
	\subsection*{\textbf{Remarks:} }
	
	1. \textbf{Residue Classes in \texorpdfstring{\boldmath$[\,p_{k}^{2},p_{k+1}^{2})$} and Parity Considerations}  
	
	In many related applications, one counts how many integers in the interval $[p_{k}^2,\,p_{k+1}^2)$ (or $(p_{k}^2,p_{k+1}^2)$) satisfy certain residue--class constraints.  For instance, if the density of solutions is $\tfrac{2}{\pi}$ among \emph{all} integers, one might expect about
	\[
	\frac{2}{\pi}\;\bigl[\,(p_{k+1}^2 - p_{k}^2)\bigr]
	\]
	solutions.  However, those solutions could include \emph{both even and odd} integers.  If we only want to consider \emph{odd} integers (such as primes $>2$), one typically divides that raw count by $2$, plus lower--order terms, because about half of all integers in large ranges are even.
	
	Hence, in prime--gap settings or other contexts where we impose $p>2$, the integral count in certain residue classes might be halved to focus on odd solutions.  This is a common bookkeeping tactic: the fraction $\tfrac{2}{\pi}$, or other fractions from residue distributions, often initially refers to \emph{all} integers, but restricting to odd solutions imposes a further factor $\tfrac12$ (or more refined partition arguments).
	
	\noindent 2. \textbf{Comparison to average gap.}  
	On average, $p_{k+1}-p_{k}\sim \ln p_{k}$, which would make $\Delta_{k}$ on the order of $p_{k}/\ln p_{k}$.  By contrast, a fixed $h\ge2$ yields $\Delta_{k}$ of order $p_{k}/(\ln p_{k})^{2}$.  Thus the fixed--gap scenario is asymptotically smaller by a factor of $\ln p_{k}$.
	
	\noindent 3. \textbf{Extensions to $(1 - c/p)$.}  
	For real $c>0$, the analogous product $\prod_{c<p<x}(1 - c/p)$ satisfies $(\ln x)^{-c}$ scaling with an associated constant.  A gap $p_{k+1}-p_{k}=h$ yields a difference $\approx (p_{k}/(\ln p_{k})^{c})$ scaled by $h$.  The proof is structurally identical once $p<c$ is excluded or included with care.
	
	\medskip
	\noindent
	
	\section*{Discussion}
	
	A critical observation is that while the density of primes diminishes as their magnitude increases, the contribution of each prime to the coverage fraction becomes more sparse. Consequently, even with careful selection of residue classes, a non-trivial proportion of integers in the given interval remains uncovered. This finding aligns with results in the broader context of sieve theory and the distribution of primes.
	
	Moreover, the uncovered integers are not uniformly distributed but exhibit patterns influenced by the modular constraints. This suggests potential avenues for future exploration, such as characterizing the distribution of uncovered integers or extending the results to higher-dimensional residue systems.
	
	\section*{Conclusion}
	
	In conclusion, we have demonstrated the existence of integers within the interval \([p_k^2, p_{k+1}^2]\) that are not covered by any union of residue classes modulo primes up to \(p_k\). This result underscores the existence of  gaps of two residue class coverings and highlights the intricate structure of integers under modular constraints. 
	
	Our findings contribute to the understanding of residue systems and their applications in number theory and beyond. Future work may focus on refining the bounds derived in this paper, exploring computational methods to verify the results for large values of \(k\), or investigating connections to other areas such as cryptographic applications or random number theory.

\end{document}